\documentclass{llncs}

\usepackage{amsmath}
\usepackage{amssymb}
\usepackage{amsfonts}
\usepackage{stmaryrd}
\usepackage{enumerate}
\usepackage{mathrsfs}

\renewcommand\texttt[1]{\text{\tt #1}}

\def\contexts{\mathscr C}
\newcommand{\N}[0]{\mathbb{N}}
\newcommand{\Prop}[0]{U_0}
\newcommand{\Nat}[0]{\texttt{Nat}}
\newcommand{\Natw}[0]{\texttt{\bf{N}}}
\newcommand{\supt}[0]{\texttt{sup}\:\:}
\newcommand{\Set}[0]{\texttt{Set}\:\:}
\newcommand{\SET}[0]{\texttt{SET}}

\newcommand{\EXTn}[0]{\texttt{EXT}_n\:\:}
\newcommand{\INn}[0]{\texttt{IN}_n\:\:}
\newcommand{\Zero}[0]{\texttt{Zero}}
\newcommand{\lsq}[0]{\llbracket}
\newcommand{\rsq}[0]{\rrbracket}

\newcommand{\tomega}[0]{{\tilde\omega}}
\newcommand{\ZFC}{{\bf ZFC}}
\newcommand{\CIC}{{\bf CIC}}
\newcommand{\PAt}{{{\bf PA}^2}}
\newcommand{\pr}{^\prime}
\newcommand{\vepsilon}{\varepsilon}

\newcommand{\ie}{{\it i.e. }}

\def\teoaname{Theorem~\ref{MAIN}$\Rightarrow$}
\def\teobname{Theorem~\ref{MAIN}$\Leftarrow$}
\spnewtheorem{fact}{Fact}{\bf}{\itshape}
\spnewtheorem*{teoa}{\teoaname}{\bf}{\itshape}
\spnewtheorem*{teob}{\teobname}{\bf}{\itshape}
\spnewtheorem{myclaim}{Claim}{\bf}{\itshape}
\spnewtheorem*{lem1}{Lemma~\ref{Z2Lsimpcompl}}{\bf}{\itshape}

\title{Reflections on Trusting Lean}
\date{\today}

\author{Lorenzo Bresolin\and Marcello Mamino}
\institute{Universit\`a di Pisa}

\begin{document}

    \maketitle
    \begin{abstract}
      It is known, by works of Werner and Carneiro, that the type theory of
Lean (and indeed of $\CIC$ plus a version of choice) is equiconsistent with
$\ZFC_\infty$, that is, $\ZFC\,+\,$``there are infinitely many strongly
inaccessible cardinals''. We prove a finer connection between these two
formal systems; namely, given any formula $\varphi$ in the language of
second order arithmetic, $\varphi$ is provable in $\ZFC_\infty$ if and only if
{\it the same} formula, when viewed as a type in Lean, is inhabited.

\keywords{Lean \and Automatic Theorem Proving \and Calculus of Constructions \and
Axiomatic Set Theory \and Type Theory \and Second Order Arithmetic} 
    \end{abstract}

\section*{Introduction}

Our work can be illustrated by a thought experiment.
We will be dealing with the Lean~4 proof assistant (see~\cite{MoUl21} for
context) but the basic concern and methodology are more general.
Now, imagine a mathematician were presented with the construction, in Lean, of an inhabitant $p$ of the propositional type $$\begin{aligned}&\forall a:\Nat,\exists b:\Nat,\forall c:\Nat,\forall d:\Nat,\\&\quad\neg((c+2)*(d+2)=a+b)\:\:\land\\&\quad\neg((c+2)*(d+2)=a+b+2).\end{aligned}$$
Further imagine that our mathematician subscribes to the axioms of $\ZFC$
and that they are willing to accept the existence of infinitely many
strongly inaccessible cardinals.  Because of Werner's and Carneiro's
work~\cite{Wer97,Car19},
they accept that Lean is free of contradiction.  Should they also be
satisfied that the twin prime conjecture must be true? We claim yes.

Similarly to other popular proof assistants such as Rocq (formerly Coq),
Lean is based on the Calculus of Inductive Constructions~($\CIC$), see, for instance, \cite{CoHu88} and~\cite{CoPa90}.
We call $\CIC^+$ the variant of~$\CIC$ implemented by Lean~4.
This calculus
extends $\CIC$ by the addition of a form of the axiom of choice, propositional
extensionality, and proof irrelevance.
It has been established by Werner~\cite{Wer97} that $\CIC$ enriched with
axioms for choice, and hence~$\CIC^+$, proves the consistency of
$\ZFC$ extended with the existence of infinitely many strongly inaccessible cardinals, which
we will denote $\ZFC_\infty$. Vice-versa, Carneiro constructs in~\cite{Car19} a
set-theoretic model of $\CIC^+$ assuming $\ZFC_\infty$. Thus $\CIC^+$ and
$\ZFC_\infty$ are equiconsistent.
However mere equiconsistency between two formal theories is in no way
assurance enough that they prove the same theorems. For an example, $\ZFC$
and $\ZFC+\neg\text{Con}(\ZFC)$ are equiconsistent, but obviously do not
prove the same formulas -- and indeed, proving a statement in the latter
would hardly be persuasive of that statement's veracity.  We propose,
here, a more fine grained result: namely that $\CIC^+$ and~$\ZFC_\infty$
prove the same theorems in the language of second order arithmetic.

There is a further subtlety. Since the languages of $\CIC^+$
and~$\ZFC_\infty$ differ, we should be precise about the meaning of
proving {\it the same theorems}. 
To carry out our comparison, it is necessary to fix translations from second order arithmetic
to set theory and types respectively. We feel that this is one important
point that we raise, namely that there are {\it conventional} or {\it
natural} translations: those that the users of the respective logical systems
perceive as such. It is important, in our view, to prove the equivalence
of, say, $\CIC^+$  and~$\ZFC_\infty$, under the premise that mathematical
statement are encoded according to the conventional translations.
Given a sentence $\varphi$ in the language of second order arithmetic,
we define translations $\varphi_Z$ and~$\varphi_C$ of~$\varphi$ in the
languages of $\ZFC_\infty$ and~$\CIC^+$ respectively. For $\ZFC_\infty$,
the obvious choice is to map the natural numbers to the finite von Neumann
ordinals. For~$\CIC^+$, using to the finite Von Neumann ordinals
constructed inside Werner's model would be artificial; on the contrary,
the conventional choice is to map the natural numbers to an
appropriate inductive type~$\Nat$,
which is the universally adopted convention in Lean.

Observe that the implementation of second order
arithmetic in a given logical system can be nuanced, so it is not a choice
that can be sensibly made in an arbitrary manner. For instance,
inside Lean itself, it is not a given that the finite
von Neumann ordinals in a model of~$\ZFC$ can be seen as a model of second
order arithmetic. In fact, Lean can construct a model of~$\ZFC+\neg\text{Con}(\ZFC)$.
This is, {\it a fortiori}, a model of~$\ZFC$, yet its finite von Neumann ordinals satisfy
$\neg\text{Con}(\ZFC)$, as opposed to $\Nat$.

\noindent With the notation established above, our main result is as
follows:

\setlength{\leftskip}{\parindent}
\setlength{\rightskip}{\parindent}
{\it \noindent Let $\varphi$ be a sentence in the language of second order
arithmetic. Then $\ZFC_\infty\vdash\varphi_Z$ if and only if $\CIC^+$ proves that the
type $\varphi_C$ is inhabited {\rm(}\ie $\CIC^+\vdash e:\varphi_C$ for
some expression $e${\rm)}.}

\setlength{\leftskip}{0em}
\setlength{\rightskip}{0em}
\noindent The proof shows the following chains of implications
$$\begin{aligned}
\ZFC_n &\vdash \varphi_Z
&&\;\;\Longrightarrow\;\;
&\CIC^+ &\vdash\left(\mathcal M_{\ZFC_n}\models\varphi_Z\right)
&&\;\;\Longrightarrow\;\;
&\CIC^+ &\vdash\varphi_C\\
\CIC^+ &\vdash\varphi_C
&&\;\;\Longrightarrow\;\;
&\ZFC_\infty &\vdash\left(\mathcal N_{\CIC^+}\models\varphi_C\right)
&&\;\;\Longrightarrow\;\;
&\ZFC_\infty &\vdash\varphi_Z
\end{aligned}$$
where $\mathcal M_{\ZFC_n}$ is Werner's model of~$\ZFC$ with $n$
inaccessibles in~$\CIC$, and $\mathcal N_{\CIC^+}$ is Carneiro's set
theoretic model of~$\CIC^+$.  The implication $\CIC^+\vdash\left(\mathcal
M_{\ZFC_n}\models\varphi_Z\right) \Rightarrow \CIC^+\vdash\varphi_C$ has
been implemented in Lean~4.

Our proof entails the feasibility of automatic translations from Lean~4
proofs of second order arithmetic statements to, for instance, Isabelle/ZF
(and vice-versa). This is a nontrivial endeavor, as it requires to
formalize Carneiro's work in Isabelle/ZF, thus obtaining a Lean~4 kernel
which is formally verified in Isabelle/ZF. A similar work has been carried
out by Krauss and Schropp~\cite{KrSc10}, providing an automatic translator
from Isabelle/HOL to Isabelle/ZF.  If at all practical, the benefit of
such a translator would be the ability to translate complex Lean~4 proofs
to independently verifiable set theoretic ones.

\section{Definition of $\PAt$ and Translations}

We will follow Carneiro's formalization of Lean, see~\cite{Car19}, as
essentially the type theory of $\CIC$ plus some additional axioms.
To summarize, the grammar of $\CIC^+$ is as follows:
\begin{align*}
\ell	&= u\:|\:0\:|\:S\ell\:|\:\max(\ell,\ell)\:|\:\text{imax}(\ell,\ell)&\text{(universe level)}\\
\Gamma	&= ()\:|\:\Gamma,x:e&\text{(context)}\\
e	&= x\:|\:U_\ell\:|\:e\:\:e\:|\:\lambda x:e\mapsto e\:|\:\forall x:e,e\:|\\
	&\quad\;\mu x:F,K\:|\:\texttt{c}_{\mu x:F,K}\:|\:\text{rec}_{\mu x:F,K}&\text{(expression)}\\
K	&= 0\:|\:(\texttt{c}:e)+K&\\
F	&= U_\ell\:|\:\forall x:e,F&
\end{align*}
$U_\ell$ are the universes; in particular $U_0$ is the sort of propositions.
The usual definitions apply for $\alpha\to\beta$ and the inductive types
$a = b$, $\alpha\times\beta$, and $\exists a:A,P\:a$.
$\CIC^+$ is endowed with $(\beta),(\eta)$ reduction rules,
as well as rules for recursors, and proof
irrelevance. To this, we add the axioms of propositional extensionality
and choice.
We denote by $\CIC^+_n$ (for $n$ natural) the fragment of $\CIC$ that only uses universes up to $U_n$.

Recall that the type $\Nat:U_1$ has an inductive definition given by
$$\Nat := \mu t:U_1,(\Zero:\:\:t)+(\texttt{Succ}:\:\:t\to t)$$
and sum and product are defined via the recursor.

We define the language of second order arithmetic following \cite{Sim09}:
\begin{definition}A {\it second order arithmetic formula} is a formula of two-sorted first-order logic (where the sorts are denoted $s_1$, called the numbers, and $s_2$, called the subsets) in the language $\mathcal L_{\PAt}=\{0,S,+,\times,\vepsilon\}$ (where $0:s_1$, $S:s_1\to s_1$, $+,\times:s_1\times s_1\to s_1$ and $\vepsilon\subset s_2\times s_1$).
(the intended meaning of $\vepsilon(P,t)$ is that $t$ is an element of the set $P$, which we'll sometimes also write as $P(t)$ for brevity.)

An {\it arithmetic sentence} is a second order arithmetic formula with no free variables. The set of arithmetic formulas is denoted $\mathcal F_{\PAt}$.

We will usually indicate with $x_1,x_2,\dots$ the number variables and with $P_1,P_2,\dots$ the subset variables. We will use the symbols $\forall,\exists$ to quantify over numbers and $\forall^2,\exists^2$ to quantify over subsets. Without loss of generality, we may restrict the logical connectives to the complete set~$\{\to,\bot\}$.
\end{definition}
To illustrate the expressivity of second order arithmetic: the standard model, where
the numbers are in~$\mathbb N$ and the subsets range over~$\wp(\mathbb N)$, is bi-interpretable
with the first order structure~$(\mathbb R,+,\cdot)$ expanded by a predicate for~$\mathbb Z\subset\mathbb R$.
The sets that this structure can describe are precisely the projective sets of
descriptive set theory~\cite[exercise 37.6]{kechris1995classical}. Most subsets of~$\mathbb R^n$ used
in everyday mathematics, such as the graphs of continuous functions, are Borel sets, thus also projective sets.
One limitation of this logic is that, although it can construct specific real functions, it does not
allow quantification over the set of all functions~$\mathbb R \to \mathbb R$.

Now, for any formula $\varphi\in\mathcal F_{\PAt}$, we are going to define
its set-theoretic interpretation $\varphi_Z\in\mathcal F_{\ZFC_\infty}$ as a formula in the language $\{\in\}$; and similarly, the type-theoretic
interpretation of $\varphi\in\mathcal F_{\PAt}$ will be an expression $\varphi_C$ of~$\CIC^+$. 
The type of $\varphi_C$ will be the type $\contexts\to\Prop$ of functions
from an appropriate type of contexts to propositions. When $\varphi$ is a sentence, $\varphi_C$ will turn out to be a constant function,
which can therefore be identified with a propositional type representing the sentence~$\varphi$.

\begin{definition}[$\PAt\rightsquigarrow\ZFC_\infty$] 
Given an arithmetic formula~$\varphi\in\mathcal F_\PAt$ we let~$\varphi_Z$ be the customary
translation of~$\varphi$ through the set-theoretical interpretation of second
order arithmetic having sorts~$\omega$ (the set of finite Von Neumann
ordinals) and~$\wp(\omega)$.
\end{definition}
The formal definition of this translation is
straightforward yet rather long: it can be obtained by the implementation of~{\tt AriFormula2\_to\_ZFC'} in the Lean code~\cite{github-repo}.

\begin{definition}[$\PAt\rightsquigarrow\CIC^+$] 
The type $\contexts$ of contexts is~$(\Nat \to \Nat)\times(\Nat \to \Set\Nat)$.
Given an arithmetic formula $\varphi\in\mathcal F_\PAt$, we define
the function~$\varphi_C:\mathcal \contexts\to\Prop$
by structural recursion on the formula~$\varphi$. The sorts are
$\Nat$ and~$\Set\Nat$. Terms are mapped sending the symbols for $0$, $+$
and~$\times$ to the corresponding operators defined pointwise on
$\contexts\to\Nat$, and the variables to the projections 
$(x_i)_C\:\langle\mathcal C_1,\mathcal C_2\rangle\equiv\mathcal C_1\:i$,
$(P_j)_C\:\langle\mathcal C_1,\mathcal C_2\rangle\equiv\mathcal C_2\:j$.
Syntactic elements are mapped to the natural type theoretic
constructions as follows:
\begin{itemize}
\item $\bot_C\:\:\langle\mathcal C_1,\mathcal C_2\rangle
\equiv \bot$
\item $(\varphi\to\delta)_C\:\:\langle\mathcal C_1,\mathcal C_2\rangle
\equiv (\varphi_C\:\langle\mathcal C_1,\mathcal C_2\rangle) \to
(\delta_C\:\langle\mathcal C_1,\mathcal C_2\rangle)$;
\item $(\exists x_i,\varphi)_C\:\:\langle\mathcal C_1,\mathcal
C_2\rangle\equiv\exists n:\Nat,\:\varphi_C\:\langle\mathcal C_1[n/i],\mathcal C_2\rangle$;
\item $(\forall x_i,\varphi)_C\:\:\langle\mathcal C_1,\mathcal
C_2\rangle\equiv\forall n:\Nat,\:\varphi_C\:\langle\mathcal C_1[n/i],\mathcal C_2\rangle$;
\item $(\exists^2 P_i,\varphi)_C\:\:\langle\mathcal C_1,\mathcal
C_2\rangle\equiv\exists Q:\Set\Nat,\:\varphi_C\:\langle\mathcal C_1,\mathcal C_2[Q/i]\rangle$;
\item $(\forall^2 P_i,\varphi)_C\:\:\langle\mathcal C_1,\mathcal
C_2\rangle\equiv\forall Q:\Set\Nat,\:\varphi_C\:\langle\mathcal C_1,\mathcal C_2[Q/i]\rangle$;
\end{itemize}
where $\mathcal C_1[n/i]$ is the function that agrees with~$\mathcal C_1$ except when the
argument is~$i$, and gives~$n$ when the argument is~$i$, and similarly for~$\mathcal C_2[Q/i]$.

When $\varphi$ is a sentence, it easy to see that~$\varphi_C$ is necessarily a
constant function. By abuse of notation we will denote value assumed by
this function by the same symbol~$\varphi_C$.
\end{definition}

\begin{remark} Notably, when $\varphi$ is a sentence, a syntactic
representation of the propositional type $\varphi_C$
can be obtained by little more than just copying the formula.
Indeed it is exactly because the
translation is so trivial that we dare say that Lean~4, or~$\CIC^+$, is
designed to encourage its user to consider $\varphi_C$ and $\varphi$,
intuitively, the {\it same} formula.
\end{remark}
Our main result is the following:
\begin{theorem}\label{MAIN}Let $\varphi$ be an arithmetic sentence (\ie a
formula with no free variables). Then $\ZFC_\infty$ proves $\varphi_Z$ if
and only if $\varphi_C$ is inhabited in $\CIC^+$.
\end{theorem}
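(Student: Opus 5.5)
We prove the two directions separately, following the chains of implications outlined in the introduction. In each direction we pass through a model of one system built inside the other, then compare the interpretation of second order arithmetic inside that model with the native one.

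\emph{($\Rightarrow$)} Suppose $\ZFC_\infty\vdash\varphi_Z$. A proof uses only finitely many axioms, so $\ZFC_n\vdash\varphi_Z$ for some $n$.

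First, we invoke Werner's construction: inside $\CIC^+$ (using universes up to roughly $U_{n+1}$) there is a model $\mathcal M_{\ZFC_n}$ of $\ZFC_n$, with sets as well-founded trees $\supt$-types quotiented by bisimulation and membership a proposition. Soundness is proved internally, axiom by axiom, and then by induction on derivations. Hence $\CIC^+\vdash(\mathcal M_{\ZFC_n}\models\varphi_Z)$, where satisfaction is unfolded as the meta-level translation of $\varphi_Z$ relativized to $\mathcal M$.

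Second, we show that $\mathcal M\models\varphi_Z$ implies $\varphi_C$. For this we build, in $\CIC^+$, a map $\iota:\Nat\to\mathcal M$ sending $k$ to the $k$-th von Neumann ordinal. We check that it is a bijection onto the elements of $\mathcal M$'s $\omega$. Injectivity is by induction on $\Nat$. Surjectivity uses the internal definition of $\omega$ as the least inductive set, and here the fact that $\mathcal M$'s $\omega$ is genuinely built from $\Nat$ in Werner's model matters. We also check that $\iota$ carries $+$ and $\times$ to the set-theoretic ones defined by recursion.

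For subsets, we send $Q:\Set\Nat$ to $\{\iota k : Q\,k\}\in\mathcal M$. This uses a separation/comprehension inside $\mathcal M$ indexed by an arbitrary $\CIC^+$ predicate, which exists because the $\supt$ constructor quantifies over all types in the universe. Conversely, every element of $\mathcal M$'s $\wp(\omega)$ arises this way; this needs propositional extensionality and the bisimulation quotient. Then a lemma, proved by structural induction on $\varphi$ and quantified over all contexts $\langle\mathcal C_1,\mathcal C_2\rangle$, shows
\[\varphi_C\langle\mathcal C_1,\mathcal C_2\rangle\leftrightarrow \mathcal M\models\varphi_Z[\iota\circ\mathcal C_1,\hat\iota\circ\mathcal C_2].\]
The quantifier cases are exactly where the two bijections are used. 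We expect this step, especially matching $\Set\Nat$ with $\mathcal M$'s full power set of $\omega$ \emph{inside} $\CIC^+$, to be the main obstacle. It is also what fails for arbitrary internal models of $\ZFC$, as the paper notes for $\ZFC+\neg\text{Con}(\ZFC)$.

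\emph{($\Leftarrow$)} Suppose $\CIC^+\vdash e:\varphi_C$. Work in $\ZFC_\infty$ and use Carneiro's model $\mathcal N_{\CIC^+}$. In it, universes are interpreted via inaccessibles, $\Prop$ as $\{\emptyset,\{\emptyset\}\}$, and inductive types as least fixed points. The proof uses finitely many universes, so finitely many inaccessibles suffice. By Carneiro's soundness theorem, $\lsq\varphi_C\rsq$ is nonempty, i.e. $\mathcal N\models\varphi_C$. We then check, in $\ZFC$, that $\lsq\Nat\rsq$ is the least fixed point of $t\mapsto\{\Zero\}\cup\texttt{Succ}(t)$, hence canonically in bijection with $\omega$, with the recursor-defined $+,\times$ matching. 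We also check that $\lsq\Set\Nat\rsq=\lsq\Nat\to\Prop\rsq$ is the set of all functions $\lsq\Nat\rsq\to2$, hence in bijection with $\wp(\omega)$.

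A structural induction on $\varphi$, uniform over contexts, gives $\lsq\varphi_C\rsq(\mathcal C)\neq\emptyset\iff\varphi_Z$ holds under the transported assignment. The $\to$ case uses that $\lsq\alpha\to\beta\rsq$ for propositions is nonempty iff the implication holds. The $\exists$ case uses that $\lsq\exists\rsq$ is truncated to $\Prop$. Applying this at the sentence $\varphi$ yields $\ZFC_\infty\vdash\varphi_Z$. This direction is routine once Carneiro's interpretation is unfolded, since the model is "standard" on $\Nat$ and $\Prop$.
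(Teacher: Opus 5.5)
Your $(\Rightarrow)$ direction is the paper's argument. It goes through Werner's model, identifies $\Nat$ and $\Set\Nat$ with $\mathcal M$'s $\omega$ and $\wp(\omega)$ compatibly with $0,S,+,\times,\vepsilon$, and concludes by a structural induction over all valuations. The paper uses graph relations $\texttt{num}$, $\texttt{sub}$ where you use maps $\iota,\hat\iota$, which is cosmetic. One small inaccuracy: Werner interprets equality by $\texttt{EXT}_n$ rather than by an actual quotient. Also, the surjectivity onto $\mathcal M$'s $\wp(\omega)$ needs neither a quotient nor propositional extensionality: for $z$ with $(z\subseteq\omega)_\mathcal M$, the predicate $Q\,k:=\texttt{IN}_n\,(\iota\,k)\,z$ already gives $\texttt{EXT}_n\,(\hat\iota\,Q)\,z$.

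The genuine gap is in $(\Leftarrow)$. Carneiro's interpretation $\lsq\cdot\rsq$ (Fact~\ref{Car}) is defined only on the fragment $\CIC^+_W$, whose sole general inductive type is the $W$-type. Inductive types such as $\Nat$ and its recursor are not interpreted as least fixed points; they are eliminated by a syntactic reduction to $W$-types. So neither $\lsq\Nat\rsq$ nor $\lsq\varphi_C\rsq$ is defined, and your step ``by Carneiro's soundness theorem $\lsq\varphi_C\rsq$ is nonempty'' has nothing to apply to.

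What is missing is the detour the paper makes:
\begin{enumerate}[(i)]
\item Define $\Natw$, the $W$-type over $\top\oplus\top$ with branching $\bot$ resp.\ $\top$, with $0,S,+,\times$ obtained from the $W$-recursor.
\item Prove \emph{inside} $\CIC^+$ that $\Nat\cong\Natw$ and $\Set\Nat\cong\Set\Natw$ compatibly with the operations, so that $\varphi_C\leftrightarrow\varphi_W$ is inhabited (Lemma~\ref{L2W}).
\item From $e$, obtain a closed inhabitant of $\varphi_W$ in the fragment. Instantiate its universe variables by a valuation so that $\lsq\cdot\rsq$ applies (Carneiro's Theorems 6.2--6.3).
\item Compute explicitly that $\lsq\Natw\rsq$ is a fixed countable set $\tilde\omega$ on which the interpreted $0,S,+,\times$ are the standard operations transported along a bijection with $\omega$ (Lemma~\ref{Nok}).
\end{enumerate}
The computation in (iv) runs through Carneiro's interpretation of $W$-types and their recursors, not through a fixed point of $t\mapsto\{\Zero\}\cup\texttt{Succ}(t)$. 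With these steps inserted, your structural induction is exactly Lemma~\ref{W2Z}, and your argument coincides with the paper's.
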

In the next sections we will outline the proof of each direction of
the equivalence.

\section{Every Arithmetic Formula Provable in $\ZFC_\infty$ Is Provable in $\CIC^+$}
\begin{teoa}\label{z2l}Let $\varphi$ be an arithmetic sentence, and assume that $\ZFC_\infty$ proves $\varphi_Z$. Then $\varphi_C$ is inhabited in $\CIC^+$.
\end{teoa}
By~\cite{Wer97}, $\ZFC_n$ can be coded into $\CIC_{n+2}^+$ as follows:
$$\SET_n := \mu t:U_{n+1},(\texttt{sup}:\forall A:U_n,(A\to t)\to t)$$
$$\begin{aligned}\EXTn (\supt A\:&\:f)\:(\supt B\:\:g):=\\
	&\left(\forall a:A,\exists b:B,\EXTn (f\:\:a)\:(g\:\:b)\right)\:\:\land\\
	&\left(\forall b:B,\exists a:A,\EXTn (f\:\:a)\:(g\:\:b)\right)\end{aligned}$$
$$\INn x\:\:(\supt A\:\:f):=\exists a:A,\EXTn x\:\:(f\:\:a).$$
\begin{fact}[Werner, 1997 \cite{Wer97}]\label{Wer}Let $1\le n\in\N$.
For each closed formula $\psi$ in the language $\{\in\}$ of set theory,
construct a propositional type $\psi_\mathcal M$ by interpreting the
equality with $\texttt{EXT}_n$, the inclusion with $\texttt{IN}_n$ and by
letting the quantifiers range over $\SET_n$. Then, whenever $\psi$ is an
axiom of $\ZFC_{n-1}$, one can construct in $\CIC_{n+1}^+$ an inhabitant
of $\psi_\mathcal M$.
\end{fact}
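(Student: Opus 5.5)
The plan is to work entirely inside $\CIC_{n+1}^+$ and verify the axioms one by one, after first establishing that $\SET_n$ with $\EXTn$ and $\INn$ is a well-behaved extensional structure. The first step is to check that the definitions are legitimate. $\EXTn$ is defined by double structural recursion on $\SET_n$, with a large elimination into $\Prop$, which is allowed since $\SET_n$ has a single constructor. From the recursor I then extract the projections $\pi_1 : \SET_n\to U_n$ and $\pi_2 : \forall x,\pi_1 x\to\SET_n$, with $x$ judgmentally $\supt(\pi_1 x)(\pi_2 x)$ via~$(\eta)$ for inductives, or propositionally by the recursor.

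By $\in$-induction, that is, by the recursor of $\SET_n$, I prove three facts:
\begin{itemize}
\item $\EXTn$ is reflexive, symmetric and transitive;
\item $\INn$ respects $\EXTn$ in both arguments;
\item the membership characterisation $\EXTn\,x\,y\leftrightarrow\forall z,(\INn z\,x\leftrightarrow \INn z\,y)$.
\end{itemize}
The characterisation follows by unfolding $\INn$ in both directions. A straightforward induction on $\psi$ then shows that $\psi_\mathcal M$ is invariant under replacing parameters by $\EXTn$-equivalent sets. This is needed because equality is interpreted by $\EXTn$ rather than by $=$.

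With this in hand, most ZF axioms are direct constructions. Extensionality is exactly the characterisation above. The empty set is $\supt\bot\,(\bot\text{-elim})$, and pairing is $\supt\texttt{Bool}$ with the two sets. The union of $\supt A\,f$ is $\supt(\Sigma a:A,\pi_1(f\,a))(\lambda\langle a,b\rangle\mapsto \pi_2(f\,a)\,b)$, which lies in $U_n$. Separation for a predicate $\phi$ uses the subtype $\{a:A\mid \phi_\mathcal M(f\,a)\}$, which is in $U_n$ since $\phi_\mathcal M:\Prop$. Power set is $\supt(A\to\Prop)\,(\lambda P\mapsto \supt\{a\mid P\,a\}(f\circ \mathrm{val}))$; here impredicativity of $\Prop$ keeps $A\to\Prop$ in $U_n$. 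Given $y$ with every member in $x$, I take $P\,a:=\INn(f\,a)\,y$ and check $\EXTn$ using the compatibility lemmas. Infinity is $\supt\Nat\,v$, where $v$ is the von Neumann coding defined by recursion on $\Nat$ (lifted to $U_n$, possible as $n\ge1$).

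Collection and Replacement take as hypothesis a provable $\forall a:A,\exists y,\psi(f\,a,y)$. I would use the choice axiom of $\CIC^+$ to obtain $g:A\to\SET_n$ with $\psi(f\,a,g\,a)$ and form $\supt A\,g$. Foundation follows from $\in$-induction via the recursor, combined with excluded middle, which is available by Diaconescu from choice and propositional extensionality, to get the usual minimal-element form. For AC I would state it as ``every set of nonempty pairwise disjoint sets has a selector''. Choice on the index types picks, for each $a$, an index $b_a:\pi_1(f\,a)$ witnessing nonemptiness, and the selector is $\supt A\,(\lambda a\mapsto\pi_2(f\,a)\,b_a)$.

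The step I expect to be the main obstacle is the inaccessible cardinals, i.e.\ the axiom that there are $n-1$ strongly inaccessibles. For each $k<n$ I would define the embedding $\iota_k:\SET_k\to\SET_n$ by recursion, using universe cumulativity or explicit lifting of index types, and take $V_k:=\supt \SET_k\,\iota_k$; this is well-typed since $\SET_k:U_{k+1}\subseteq U_n$. I must then show that the rank, or equivalently the ordinal $V_k\cap\mathrm{Ord}$, is strongly inaccessible in $\mathcal M$. Transitivity of $V_k$ and closure under power set and union will follow because $\iota_k$ commutes with the constructions above. For regularity, given $a\in V_k$ and a function $F:a\to V_k$ in $\mathcal M$, I use choice to pull $F$ back to an index-level function into $\SET_k$, and then apply the replacement construction inside $\SET_k$. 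The delicate points are proving that $\iota_k$ preserves and reflects $\EXT$ and $\IN$ and commutes with power set up to $\EXT$. I would attack these first, by simultaneous $\in$-induction, before deriving inaccessibility.
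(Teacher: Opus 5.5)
The paper does not prove this Fact; it imports it from Werner~\cite{Wer97}. Werner's argument is the Aczel-style construction you reconstruct, and your treatment of the ZF axioms and of the lower universes follows that route. There is, however, a genuine gap in your verification of the axiom of choice.

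Choosing an index $b_a:\pi_1(f\,a)$ separately for each $a:A$ is not invariant under $\EXTn$. Distinct indices can carry intensionally different codes of the same set, and then the elements picked from them need not agree. Concretely:
\begin{itemize}
\item Let $A$ be a lift of $\texttt{Bool}$, and let $f$ send its two elements to $\supt\texttt{Bool}\,g$ and $\supt\texttt{Bool}\,g'$. These are two codes of $\{\varnothing,\{\varnothing\}\}$ listing its elements in opposite orders.
\item In $\mathcal M$, $x=\supt A\,f$ is $\{\{\varnothing,\{\varnothing\}\}\}$, a family of nonempty, vacuously pairwise disjoint sets.
\item Both index types are literally $\texttt{Bool}$, so the natural implementation of your choice returns the same $b_0$ for both.
\item Your selector then has members $g\,b_0$ and $g'\,b_0$. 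It is $\{\varnothing,\{\varnothing\}\}$, which meets the unique member of $x$ in two points.
\end{itemize}
In general you cannot prove the selector property. Disjointness only tells you that two members sharing an element are $\EXTn$-equivalent; it can never force the choices made from equivalent codes to coincide.

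The missing idea is to make the choice depend only on the $\EXTn$-class. For instance:
\begin{itemize}
\item Take a Hilbert operator $\varepsilon:(\SET_n\to\Prop)\to\SET_n$ from the choice axiom, and set $\sigma(y):=\varepsilon(\lambda z.\,\INn z\,y)$.
\item If $\EXTn y\,y'$, then compatibility of $\INn$, together with propositional and function extensionality (the latter available in Lean~4 via quotients), gives $(\lambda z.\,\INn z\,y)=(\lambda z.\,\INn z\,y')$. Hence $\sigma(y)=\sigma(y')$ on the nose.
\item Then $\supt A\,(\lambda a.\,\sigma(f\,a))$ is a genuine selector. Any member of $x$ containing $\sigma(f\,a)$ is, by disjointness, $\EXTn$-equivalent to $f\,a$, so its chosen element is the same.
\end{itemize}
Alternatively, choose on the quotient $\SET_n/\EXTn$. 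Your other uses of choice, in Collection and in pulling back $F:a\to V_k$, are fine, because there only the resulting set up to $\EXTn$ matters.

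Three smaller corrections:
\begin{itemize}
\item The inaccessibles must come from $1\le k\le n-1$, not from all $k<n$. $\SET_0$ has propositional index types, so by proof irrelevance its elements have at most one member, and $V_0$ is not even closed under pairing.
\item Large elimination out of $\SET_n$ is allowed because $\SET_n$ does not live in $\Prop$. The single-constructor criterion concerns inductive propositions only.
\item $A\to\Prop$ lies in $U_n$ because $\Prop:U_1$ and $n\ge1$, not because of impredicativity.
\end{itemize}
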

Note that it is an immediate consequence of Fact~\ref{Wer} that $\ZFC_\infty\models\varphi_Z$ implies that, for $n$ large enough, $(\varphi_Z)_\mathcal M$ is inhabited, where $\mathcal M$ is the model of $\ZFC_n$ in $\CIC^+_{n+2}$ given by Fact~\ref{Wer}. As such, to show Theorem~\ref{MAIN}($\Rightarrow$) it is sufficient to exhibit an inhabitant of the type $(\varphi_Z)_\mathcal M\to\varphi_C$.

We indeed show slightly more:
\begin{lemma}\label{Z2Lsimpcompl}Let $n\ge1$ be arbitrary, and $\SET:=\SET_n$. There are functions $\texttt{num:}\Nat\to\SET\to U_0$ and $\texttt{sub:}\Set\Nat\to\SET\to U_0$ so that the following holds. For any arithmetic formula $\varphi$, let $x_i$ denote the free number variables of $\varphi$ and $P_j$ the free subset variables of $\varphi$ (for $i,j$ ranging over some finite subsets of the integers), and let $(n_i:\Nat)$, $(Q_j:\Set\Nat)$, $(y_i:\SET)$, $(z_j:\SET)$ be s.t.
\begin{enumerate}[(i)]
\item $\texttt{num }n_i\:\:y_i$ is inhabited for each $i$
\item $\texttt{sub }Q_j\:\:z_j$ is inhabited for each $j$.
\end{enumerate}
Then the following propositional type is inhabited:
$$\varphi_C[n_i/x_i\dots,Q_j/P_j\dots]\leftrightarrow(\varphi_{Z})_\mathcal M[y_i/x_i\dots,z_j/P_j\dots].$$
\end{lemma}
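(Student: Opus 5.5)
We plan to define $\texttt{num}$ and $\texttt{sub}$ explicitly and then prove the equivalence by structural induction on $\varphi$. First we define a canonical embedding $\iota:\Nat\to\SET$ by recursion on $\Nat$: $\iota\:\Zero$ is the set with empty index type, and $\iota(\texttt{Succ}\:k)$ codes $\iota\:k\cup\{\iota\:k\}$. Concretely, it is $\supt(\texttt{Option}\:A)\:g$ where $\iota\:k=\supt A\:f$, with $g\:\texttt{none}=\iota\:k$ and $g\:(\texttt{some}\:a)=f\:a$; note that $\texttt{Option}\:A:U_n$ since $A:U_n$ and $n\ge1$. We then set
\[
\texttt{num}\:m\:y:=\texttt{EXT}_n\:y\:(\iota\:m),
\qquad
\texttt{sub}\:Q\:z:=\forall w:\SET,\:\texttt{IN}_n\:w\:z\leftrightarrow\exists m:\Nat,\,Q\:m\land \texttt{EXT}_n\:w\:(\iota\:m).
\]
The basic facts about $\iota$ follow by induction on $\Nat$, using that $\texttt{EXT}_n$ is an equivalence relation respected by $\texttt{IN}_n$ (these are Werner's preliminary lemmas underlying Fact~\ref{Wer}). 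We need four of them:
\begin{enumerate}[(a)]
\item $\iota$ is injective modulo $\texttt{EXT}_n$;
\item $\iota\:m$ is a natural number in the sense of $\mathcal M$ ($\emptyset$ or a successor, transitive, linearly ordered by $\in$, with every element either empty or a successor);
\item every $\mathcal M$-natural number $y$ satisfies $\texttt{EXT}_n\:y\:(\iota\:m)$ for some $m$;
\item the $\mathcal M$-object $\omega$ is extensionally $\{\iota\:m\mid m:\Nat\}$.
\end{enumerate}
Fact (c) is proved by $\in$-induction inside $\mathcal M$, which $\mathcal M$ satisfies because $\SET$ is an inductive type. From (d), $\mathcal M$-subsets of $\omega$ correspond exactly to predicates $Q:\Set\Nat$. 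Given $Q$, we form the set $\supt\{m:\Nat\mid Q\:m\}\:(\iota\circ\mathrm{val})$; this lives in $U_n$ because the subtype does. Conversely, given $z\subseteq\omega$, we set $Q\:m:=\texttt{IN}_n\:(\iota\:m)\:z$.

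Next we treat terms. For each term $t$ we show, by induction on $t$, that if the hypotheses (i) and (ii) hold then $\texttt{num}\:(t_C[n_i])\:(\text{the }\mathcal M\text{-value of }t[y_i])$ is inhabited. Since $\varphi_Z$ is a formula in $\{\in\}$, the function symbols $0,S,+,\times$ are eliminated there via defining formulas. The real content is therefore showing that the $\mathcal M$-interpretations of the set-theoretic definitions of $S$, $+$, $\times$ on $\omega$ agree with $\iota$ applied to the Lean operations. For $S$ this holds by the definition of $\iota$. For $+$ and $\times$, the $\mathcal M$-function defined by recursion on $\omega$ is unique by $\mathcal M$-induction, and the function $\iota(a)\mapsto\iota(a+b)$ satisfies the recursion equations, so the two agree. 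The atomic formulas are then immediate: equality of numbers corresponds, via (a), to equality in $\Nat$; and $\vepsilon(P_j,t)$ unfolds through $\texttt{sub}$ to $Q_j\:(t_C)$, using that $Q$ respects $\texttt{EXT}_n$ on the $\iota$-image. The case $\bot$ is trivial.

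For the inductive step, the case $\to$ follows directly from the induction hypotheses. For a number quantifier $\forall x_i$, we must show that $\forall n:\Nat,\varphi_C[n/x_i]$ is equivalent to $\forall y:\SET,\,(y\in\omega)_{\mathcal M}\to(\varphi_Z)_{\mathcal M}[y/x_i]$. For the forward direction, given $y$ we pick $m$ with $\texttt{num}\:m\:y$ by (c)/(d) and apply the induction hypothesis with the extended context. For the backward direction, given $n$ we instantiate $y:=\iota\:n$. The case $\exists$ is symmetric. The second order quantifiers are handled in the same way, using the correspondence between predicates $Q$ and $\mathcal M$-subsets of $\omega$; here propositional extensionality is not even needed, because only the relation $\texttt{sub}$ is used and no equality of predicates. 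The context substitutions $\mathcal C[n/i]$ must be tracked, so the induction should be stated for arbitrary contexts whose entries are related pointwise by $\texttt{num}$/$\texttt{sub}$ on the free variables.

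We expect the main obstacle to be the arithmetic of terms: matching the set-theoretic definitions of $+$ and $\times$ in $\varphi_Z$ (via recursion on $\omega$, as in {\tt AriFormula2\_to\_ZFC'}) with Lean's recursor. This requires the recursion theorem inside $\mathcal M$ together with the uniqueness argument above. Proving (c) is the second delicate point, since it needs $\in$-induction and the $\mathcal M$-definition of $\omega$ to line up with $\iota$ up to $\texttt{EXT}_n$.
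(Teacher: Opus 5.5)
Your proposal is correct and takes essentially the same route as the paper. Like the paper, you make $\texttt{num}$ and $\texttt{sub}$ relational correspondences: your $\texttt{num}\;m\;y:=\texttt{EXT}_n\;y\;(\iota\;m)$ agrees up to $\texttt{EXT}_n$ with the paper's ``$y$ is the $m$-th finite ordinal in $\mathcal M$'', defined by recursion on $m$. You then check that they are total, functional and compatible with $0,S,+,\times$, and run a structural induction on $\varphi$ over arbitrary contexts related pointwise on the free variables.

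The differences are only presentational. The paper internalizes the syntax as an inductive type $\texttt{Form}$ with valuations $\texttt{val}_C,\texttt{val}_Z$, so the induction is a single Lean theorem rather than a meta-level schema. As you observe, only the relational correspondence is needed, not the full isomorphism $\Set\Nat\cong(x:\SET)\times(x\subset\omega)_{\mathcal M}$ the paper states. One small fix: for $n>1$ your $\Nat$-indexed families (and the subtype $\{m \mid Q\,m\}$) live in $U_1$, not $U_n$, so they need a \texttt{ULift}, since Lean has no cumulativity.
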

We provide a Lean~4 formalization of this lemma and of \teoaname~\cite{github-repo};
references to the Lean code will be provided in footnotes.

The proof\footnote{In the code, {\tt ZFC\_iff\_Lean'}.} is carried out by structural recursion on $\varphi$; the general sketch is as follows:
\begin{enumerate}[(i)]
\item Construct $\texttt{num}$ by structural recursion on its first argument, so that intuitively $\texttt{num}\:\:n\:\:x$ is true when ``$x$ is the $n$-th finite ordinal'' is true in $\mathcal M$; and similarly produce $\texttt{sub}$ s.t. $\texttt{sub}\:\:P\:\:x$ is true when $(x\subset\omega)_\mathcal M$ and, for any $n$, ``$x$ contains the $n$-th finite ordinal''$_\mathcal M$ iff $P\:\:n$. Then, one verifies that $\texttt{num}$, $\texttt{sub}$ are the graphs of bijective maps which are well-behaved w.r.t. the operations of arithmetic, hence that $\Nat$ is isomorphic to $(x:\SET)\times(x\in\omega)_\mathcal M$, and that $\Set\Nat$ is isomorphic to $(x:\SET)\times(x\subset\omega)_\mathcal M$.\footnote{In the code, those two relations are provided by {\tt is\_number} and {\tt is\_real} resp. The fact that those maps are bijections that preserve the structure is provided by {\tt isoN} and the theorems from {\tt plus\_is\_sum} to {\tt succ\_is\_succ}.}
\item Construct an inductive type $\texttt{Form}$ of arithmetic formulas (with variables indexed with $\Nat$), and associate a $\bar\varphi:\texttt{Form}$ to each formula $\varphi$.\footnote{In the code, this inductive type is {\tt AriFormula2}, which uses the inductive types {\tt AriTerm} and {\tt AriTerm2} to represent terms of first and second sort resp.}
\item Construct valuations $\texttt{val}_C:(\Nat\to\Nat)\to(\Nat\to\Set\Nat)\to\texttt{Form}\to\Prop$ that, given an $f:\Nat\to\Nat$ valuation of the number variables, $g:\Nat\to\Set\Nat$ valuation of the subset variables and $\bar\varphi:\texttt{Form}$, outputs a true proposition iff $\varphi_C[(f\:\:i)/x_i\dots,\:(g\:\:j)/P_j\dots]$ is true; and similarly $\texttt{val}_Z:(f:\Nat\to\SET)\to(g:\Nat\to\SET)\to(\bar\varphi:\texttt{Form})$ that outputs a true proposition iff $(\varphi_Z)_\mathcal M[(f\:\:i)/x_i\dots,\:(g\:\:j)/P_j\dots]$ is true.\footnote{In the code, those maps are {\tt AriFormula2\_to\_Lean'} and {\tt AriFormula2\_to\_ZFC'} resp.}
\item Finally check that $(\texttt{val}_C\:\:f\:\:g\:\:\bar\varphi)\leftrightarrow(\texttt{val}_Z\:\:f\pr \:\:g\pr\:\:\bar\varphi)$ is inhabited as long as $\texttt{num}\:\:(f\:\:i)\:\:(f\pr\:\:i)$ whenever $x_i\in FV(\varphi)$ and $\texttt{sub}\:\:(g\:\:j)\:\:(g\pr\:\:j)$ whenever $P_j\in FV(\varphi)$, by structural recursion on $\bar\varphi$.
\end{enumerate}

In the end, we will have constructed an inhabitant $\texttt{C\_iff\_Z}$ of the propositional type
$$\begin{aligned}
\forall \bar\varphi:\texttt{Form},&\:\forall f:\Nat\to\Nat,\:\forall g:\Nat\to\Set\Nat,\\
&\:\forall f\pr:\Nat\to\SET,\:\forall g\pr:\Nat\to\SET,\\
&\quad(h:\:\forall n:\Nat,\text{``$x_n$ is free in $\varphi$''}\:\to \texttt{num}\:\:(f\:\:n)\:\:(f\pr\:\:n)) \to\\
&\quad(h\pr:\:\forall n:\Nat,\text{``$P_n$ is free in $\varphi$''}\:\to \texttt{sub}\:\:(g\:\:n)\:\:(g\pr\:\:n)) \to\\
&\quad\quad\big((\texttt{val}_C\:\:f\:\:g\:\:\bar\varphi)\leftrightarrow(\texttt{val}_Z\:\:f\pr
\:\:g\pr\:\:\bar\varphi)\big)
\end{aligned}$$

\begin{proof}[\teoaname]\footnote{In the code, this is provided by {\tt ZFC\_iff\_Lean}.} Let $\varphi$ be an arithmetic sentence, and
assume $\ZFC_\infty\models\varphi_Z$. From Fact~\ref{Wer}, then
$(\varphi_Z)_\mathcal M$ is inhabited, let $\alpha:(\varphi_Z)_\mathcal
M$. Applying $\texttt{C\_iff\_Z}\;\bar\varphi$ to arbitrary contexts we
obtain an inhabitant of $\varphi_C \leftrightarrow (\varphi_Z)_\mathcal M$. We
conclude,
applying the $\leftarrow$ arrow of this to $\alpha$. \qed
\end{proof}
\section{Every Arithmetic Formula Provable by Lean is Provable in $\ZFC_\infty$}
\begin{teob}{}\label{l2z}Let $\varphi$ be an arithmetic formula with no free variables, and assume that $\varphi_C$ is inhabited in $\CIC^+$. Then, $\ZFC_\infty$ proves $\varphi_Z$.
\end{teob}
To prove that, we will follow the following scheme:
\begin{enumerate}[(i)]
\item Following Carneiro \cite{Car19}, for each $\varphi$ one can construct, assuming the existence of $n$ strongly inaccessible cardinals for some sufficiently large $n$ (which may, of course, depend on $\varphi$) a set $\lsq\varphi\rsq$ which $\ZFC_n$ proves to be nonempty precisely if $\varphi_C$ is inhabited.
\item We then need to show that \[\ZFC_n\vdash(\lsq\varphi\rsq\not=\varnothing)\longrightarrow\varphi_Z.\]
From that, we can conclude that $\ZFC_n\vdash\varphi_Z$ whenever
$\varphi_C$ is inhabited.
\end{enumerate}

\noindent Carneiro describes, in~\cite[section~5]{Car19}, a model of the fragment $\CIC^+_W$ of
$\CIC^+$ that uses only the $W$-type as a general inductive type,
plus a handful of simple constructors.
$W$-types are those produced by the $W$ constructor, as defined in the
following definition:
\begin{definition} (Martin-L\"{o}f 1982, \cite{Lof82}) Let $\Gamma\vdash
a:U_\ell$ and $\Gamma,\:a:U_\ell\vdash b:U_\ell\pr$; then $$Wx:a,b\::=\mu
t:U_{\max(\ell,\ell\pr,1)},\:(\texttt{sup: } a\to(b\to t)\to
t).$$
\end{definition}
Then, \cite{Car19} shows a reduction of the type theory of Lean onto this fragment.
Technically, this means that we need to work with the model of~$\CIC^+_W$
directly.

Let us begin with the definition of $\Natw$ obtained as the special case
of the translation described in \cite[section 5.2]{Car19} applied to the inductive type $\Nat$.
\[\begin{aligned}\Natw:U_1\::=Wx:\top\oplus\top,\texttt{rec}_\oplus\:&(\lambda
\_:\top\:\mapsto \bot)\:(\lambda \_:\top\:\mapsto \top)\:x.\end{aligned}\]
One can compute the $\CIC_W^+$ terms $0:\Natw$, $S:\Natw\to\Natw$,
$+:\Natw\to\Natw\to\Natw$, $\times:\Natw\to\Natw\to\Natw$ which
correspond to the usual operators on $\Nat$ via the reduction to
$\CIC^+_W$.
To any arithmetic formula $\varphi$ we associate $\varphi_W:\contexts\to U_0$ in the in the same manner as $\varphi_C$, except by quantifying over $\Natw$ instead of $\Nat$.

What we gain from those contortions is that now $\varphi_W$ is a type of
the fragment $\CIC^+_W$; so we would like to use it in place of
$\varphi_C$. We can do this as long as $\varphi_C\leftrightarrow\varphi_W$ is inhabited; for then $\varphi_W$ would be inhabited iff $\varphi_C$ is.

The following lemma is proved by a straightforward structural induction:
\begin{lemma}\label{L2W}There are bijections $a:\Nat\to\Natw$ and $A:(\Set\Nat)\to(\Set\Natw)$ s.t. the following holds. Whenever $\varphi$ is an arithmetic formula with free number variables $x_i$ and free subset variables $P_j$, and $n_i:\:\Nat\dots,\:Q_j:\:\Set\Nat\dots$, then $\varphi_C[n_i/x_i\dots,Q_j/P_j\dots]\leftrightarrow\varphi_W[(a\:n_i)/x_i\dots,(A\:Q_j)/P_j\dots]$ is inhabited.

In particular, if $\varphi$ is an arithmetic sentence, then the propositional type $\varphi_C\leftrightarrow\varphi_W$ is inhabited.
\end{lemma}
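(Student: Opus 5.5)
We will construct $a:\Nat\to\Natw$ by recursion on $\Nat$: $a\,\Zero:=0$ and $a\,(\texttt{Succ}\,m):=S\,(a\,m)$. In the other direction, $b:\Natw\to\Nat$ is defined by the $W$-recursor. A node $\texttt{sup}\,x\,f$ with $x$ the left summand (empty branching type $\bot$) goes to $\Zero$. A node with $x$ the right summand (branching type $\top$) goes to $\texttt{Succ}$ of the recursive value at $f\,\star$.

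We then check that $a$ and $b$ are mutually inverse up to propositional equality. The identity $b\,(a\,m)=m$ follows by induction on $m$, since the recursor computes definitionally. For $a\,(b\,w)=w$ we induct on $W$. The key step is to show that $0$ and $S\,w'$, as defined via Carneiro's reduction, agree with the corresponding $\texttt{sup}$ nodes. In the zero case the node is $\texttt{sup}\,(\texttt{inl}\,\star)\,f$ with $f:\bot\to\Natw$, and we need $f$ to equal the canonical eliminator from $\bot$. This needs function extensionality, which is available in $\CIC^+$ since it follows from propositional extensionality and quotients/choice as in Lean. In the successor case we also need funext, together with $\eta$ for $\top$, to identify $f$ with $\lambda\_\mapsto f\,\star$. \textbf{This is the step I expect to be the main obstacle:} matching the concrete constructors of $\Natw$ produced by \cite[section 5.2]{Car19} with $\texttt{sup}$ nodes, and handling the dependence of the branching type on $x$ through $\texttt{rec}_\oplus$ (the cases on $x$ must be done with a dependent motive).

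Next we show that $a$ preserves the structure, namely $a\,(m+n)=a\,m+a\,n$, $a\,(m\times n)=a\,m\times a\,n$ and $a\,(\texttt{Succ}\,m)=S\,(a\,m)$. We prove these by induction on $n$. This works because the operations on $\Natw$ are, by construction, the images of the recursive definitions on $\Nat$, so the defining equations hold at least propositionally. We define $A\,Q:=Q\circ b$, that is $\lambda w\mapsto Q\,(b\,w)$, with inverse $B\,R:=R\circ a$. The two composites are identities by funext together with $b\circ a=\mathrm{id}$ and $a\circ b=\mathrm{id}$; we transport along these equalities (or use propositional extensionality pointwise). Membership is then preserved: $(A\,Q)\,(a\,m)\leftrightarrow Q\,(b\,(a\,m))\leftrightarrow Q\,m$.

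Finally we prove the stated equivalence by structural induction on $\varphi$, generalised to arbitrary contexts. The claim is that for all $\mathcal C_1,\mathcal C_2$, $\varphi_C\langle\mathcal C_1,\mathcal C_2\rangle\leftrightarrow\varphi_W\langle a\circ\mathcal C_1,A\circ\mathcal C_2\rangle$. We first establish by induction on terms $t$ that $t_W\langle a\circ\mathcal C_1,\cdot\rangle=a\,(t_C\langle\mathcal C_1,\cdot\rangle)$, using the homomorphism facts. Atomic formulas then follow: equality because $a$ is injective, and $\vepsilon$ by the membership computation above.

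The cases $\bot$ and $\to$ are immediate. For $\forall x_i$ and $\exists x_i$, surjectivity of $a$ (via $b$) lets us convert quantification over $\Natw$ to quantification over $\Nat$. We also need $(a\circ\mathcal C_1)[a\,n/i]=a\circ(\mathcal C_1[n/i])$, which holds pointwise and so by funext. The second-order quantifiers are handled in the same way using the bijection $A$.

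The sentence case follows by instantiating arbitrary contexts, since both translations are then constant.
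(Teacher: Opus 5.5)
Your proposal is correct and follows the paper's route. The paper only says the lemma is a straightforward structural induction on $\varphi$, and you supply the details it omits: the inverse $b$ via the $W$-recursor, $A\,Q := Q\circ b$, preservation of $0,S,+,\times$, and the induction generalised over contexts. One precision: the function extensionality you need for $a\circ b=\mathrm{id}$ comes in Lean~4 from the quotient axiom \texttt{Quot.sound}, not from propositional extensionality or choice, so you are relying on that part of Lean's theory (and Carneiro's), which the paper's summary of $\CIC^+$ does not list explicitly.
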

Given a (strictly) increasing sequence of cardinals
$(\gamma_n)_{n\in\omega}$, we say that it is {\it $k$-correct} if $\gamma_i$ is inaccessible for all $0\le i<k$.
\begin{fact}[Carneiro \cite{Car19}, 2019]\label{Car} Let $(\kappa_n)_{n\in\omega}$ be an increasing succession of strong limit cardinals, and let
$U_0=\{\varnothing,\{\bullet\}\}$, $U_{n+1}=V_{\kappa_{n+1}}$, $U_\omega=\bigcup_nU_n$ (where $\bullet=\varnothing$ is the proof object). Then there exist maps (where $|\Gamma|$ denotes the length of a given context $\Gamma$)
$$\begin{aligned}
	\Gamma\text{ well-formed context }
		\quad&\mapsto\quad
	\lsq\Gamma\rsq\subset U_\omega^{|\Gamma|}\\
	\begin{matrix}
		\big(\text{$\Gamma$ well-formed context, $\gamma\in\lsq\Gamma\rsq$},\\
		\quad\quad\quad\quad\quad\quad\text{$e$ s.t. $\Gamma\vdash e:\alpha$ for some $\alpha$}\big)
	\end{matrix}
		\quad&\mapsto\quad
	\lsq\Gamma\vdash e\rsq_\gamma\in U_\omega
\end{aligned}$$
(where $e$ ranges over the expressions of $\CIC^+_W$) so that:\begin{enumerate}[i.]
\item if $\Gamma\vdash\alpha:U_0$, then $\lsq\Gamma\vdash\alpha\rsq_\gamma\subseteq\{\bullet\}$
\item if $\Gamma\vdash e:\alpha$, and if $\kappa_n$ is $k$-correct for a sufficiently large $k$, then for all $\gamma\in\lsq\Gamma\rsq$, $\lsq\Gamma\vdash e\rsq_\gamma\in\lsq\Gamma\vdash\alpha\rsq_\gamma$
\item if $\Gamma\vdash e\equiv e\pr$, and if $\kappa_n$ is $k$-correct for a sufficiently large $k$, then for all $\gamma\in\lsq\Gamma\rsq$, $\lsq\Gamma\vdash e\rsq_\gamma=\lsq\Gamma\vdash e\pr\rsq_\gamma.$
\end{enumerate}
\end{fact}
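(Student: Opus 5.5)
This Fact is Carneiro's soundness theorem, and I would reprove it by building the interpretation directly and checking soundness by induction on typing derivations. Universe levels are interpreted first. Given the sequence $(\kappa_n)$, a level expression $\ell$ with its level variables valued in $\omega$ evaluates to a natural number: $\text{imax}(a,b)$ evaluates to $0$ when $b$ does, and to $\max(a,b)$ otherwise. The sort $U_\ell$ is then sent to the set $U_{\text{value of }\ell}$ of the statement. For expressions, I would not define $\lsq\Gamma\vdash e\rsq_\gamma$ by recursion on derivations. Instead I would define it as a \emph{partial} function on raw terms, by recursion on $e$:
\begin{itemize}
\item variables are projections from $\gamma$;
\item $\lsq\lambda x{:}\alpha\mapsto e\rsq_\gamma$ is the graph $\{(a,\lsq e\rsq_{\gamma,a}) : a\in\lsq\alpha\rsq_\gamma\}$, and application is function application;
\item $\lsq\forall x{:}\alpha,\beta\rsq_\gamma$ is the set of all choice functions $\prod_{a\in\lsq\alpha\rsq_\gamma}\lsq\beta\rsq_{\gamma,a}$, except when $\beta$ lives in $U_0$, where it is $\{\bullet\}$ if every fibre is $\{\bullet\}$ and $\varnothing$ otherwise (impredicative, proof-irrelevant Prop).
\end{itemize}
Contexts are interpreted by the same recursion: $\lsq\Gamma,x{:}\alpha\rsq=\{(\gamma,a):\gamma\in\lsq\Gamma\rsq,\ a\in\lsq\Gamma\vdash\alpha\rsq_\gamma\}$.

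For $W$-types, $\lsq Wx{:}a,b\rsq_\gamma$ is the least fixed point of the operator
\[X\mapsto\{(x,f): x\in A,\ f\colon B_x\to X\},\qquad A=\lsq a\rsq_\gamma,\ B_x=\lsq b\rsq_{\gamma,x}.\]
I would reach it by iterating the operator transfinitely up to a regular cardinal $\lambda>\sup_x|B_x|$, where the iteration stabilises. $\texttt{sup}$ is interpreted as pairing. The recursor is defined by well-founded recursion on the rank of elements of the fixed point. For a $W$-type in $U_0$, I collapse the interpretation to $\{\bullet\}$ or $\varnothing$ according to whether the fixed point is nonempty. The remaining "simple constructors" of $\CIC^+_W$ (equality, $\oplus$, $\top$, $\bot$, quotients, choice) are interpreted by their obvious set-theoretic counterparts; choice is interpreted via the global choice available in $\ZFC$.

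Soundness is then a single simultaneous induction on derivations. The claims proved together are: $\Gamma$ well-formed implies $\lsq\Gamma\rsq$ is defined; $\Gamma\vdash e:\alpha$ implies both interpretations are defined with $\lsq e\rsq_\gamma\in\lsq\alpha\rsq_\gamma$; and $\Gamma\vdash e\equiv e'$ implies equal interpretations. This simultaneous induction is needed because conversion feeds into typing. The key auxiliary result is a substitution lemma $\lsq e[t/x]\rsq_\gamma=\lsq e\rsq_{\gamma,\lsq t\rsq_\gamma}$, together with a weakening lemma. With these in hand the cases are routine:
\begin{itemize}
\item $\beta$-reduction follows from substitution, and $\eta$ from extensionality of function graphs.
\item Recursor computation holds by the defining equation of the well-founded recursion.
\item Proof irrelevance holds because every inhabitant of a proposition is $\bullet$, which is exactly clause (i) of the Fact.
\item Propositional extensionality holds because a proposition is determined by whether it is $\{\bullet\}$ or $\varnothing$.
\end{itemize}

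The main obstacle is closure: in the typing rule $U_\ell:U_{S\ell}$ and in the formation rules for $\forall$ and $W$ we must show that the interpretations land in $U_{n}$. A $\Pi$ over a family of sets of rank $<\kappa_n$ stays below $\kappa_n$ only if $\kappa_n$ is a strong limit, which is assumed. The $W$ construction needs the iteration length $\lambda$ to be $<\kappa_n$, and hence that $\kappa_n$ be regular, i.e.\ inaccessible. This is exactly why clauses (ii) and (iii) require $k$-correctness for large $k$. I would take $k$ to exceed the maximal universe level occurring in the (finite) derivation, after evaluating level variables. The level assignment must also be threaded through the induction, so that universe-polymorphic constants are handled uniformly. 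I expect the $W$-type closure argument, together with the bookkeeping for impredicative $U_0$ under $\text{imax}$, to be the delicate part. Everything else is a syntactic induction.
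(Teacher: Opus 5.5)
Your architecture is the right one: a partial interpretation by recursion on terms, one simultaneous induction over typing and conversion derivations, impredicative $U_0$ collapsed into $\{\varnothing,\{\bullet\}\}$, and $W$-types as fixed points reached below an inaccessible. As written, however, the interpretation is not sound, because you collapse propositions without collapsing their inhabitants.

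When $\beta:U_0$, your $\Pi$-clause gives $\lsq\Gamma\vdash\forall x{:}\alpha,\beta\rsq_\gamma\subseteq\{\bullet\}$. Clause (ii) then forces every proof of such a type to be interpreted as $\bullet=\varnothing$. Your $\lambda$-clause instead produces a graph, which is nonempty as soon as $\lsq\alpha\rsq_\gamma\neq\varnothing$. Take a proposition $\alpha$ with $\lsq\vdash\alpha\rsq_{()}=\{\bullet\}$, say $0=0$ on $\Natw$. The term $\lambda q{:}\alpha\mapsto q$ of type $\alpha\to\alpha$ is sent to $\{(\bullet,\bullet)\}\notin\{\bullet\}=\lsq\vdash\alpha\to\alpha\rsq_{()}$. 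Dually, a context variable $h:\forall x{:}\alpha,\beta$ with $\beta:U_0$ is assigned $\bullet=\varnothing$ by $\gamma$. So $\lsq\Gamma\vdash h\,a\rsq_\gamma$, read as literal function application, is undefined. Your argument for definitional proof irrelevance (``every inhabitant of a proposition is $\bullet$'') therefore presupposes exactly what your clauses fail to deliver.

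The missing ingredient is an explicit proof-collapse:
\begin{itemize}
\item set $\lsq\Gamma\vdash\lambda x{:}\alpha\mapsto e\rsq_\gamma:=\bullet$ when $\Gamma,x{:}\alpha\vdash e:\beta$ with $\Gamma,x{:}\alpha\vdash\beta:U_0$;
\item set $\lsq\Gamma\vdash f\,a\rsq_\gamma:=\bullet$ when $f$ is a proof;
\item do likewise for constructors of propositional inductives such as reflexivity.
\end{itemize}
This cannot be decided semantically. The term $\lambda x{:}\Natw\mapsto\bot$ has all its values equal to $\lsq\bot\rsq=\varnothing=\bullet$, yet it must be a genuine element of $\prod_{\tilde\omega}\{\varnothing,\{\bullet\}\}$. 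So your recursion on raw terms must contain typing-dependent case splits, as is already implicit in your $\Pi$-clause. With level variables present, these splits also depend on the level valuation through $\text{imax}$. This is one reason the paper, following Carneiro, applies $\lsq\cdot\rsq$ only to $\langle e'\rangle_v$, after instantiating all universe variables (Theorems 6.2--6.3 of \cite{Car19}). If you thread the valuation instead, the case splits must be made relative to it. Once the collapse is in place, the $\beta$- and $\eta$-cases for proof terms reduce, via your substitution lemma, to the induction hypotheses (i) and (ii), which show that both sides equal $\bullet$.

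A minor inaccuracy: closure of $V_\kappa$ under $\prod$ over a family indexed by an element of $V_\kappa$ already needs $\kappa$ regular, not only the $W$-construction. Fibre ranks can be cofinal in a singular strong limit. This is harmless, since you assume the relevant $\kappa_i$ inaccessible anyway.
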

In the proof of Fact~\ref{Car} in \cite{Car19}, $\lsq\Gamma\rsq,\lsq\Gamma\vdash e\rsq_\gamma$ are defined by mutual recursion. The intended meaning is that $\lsq\Gamma\vdash e\rsq_\gamma$ is the set that represents $e$ whenever the variables that appear free in it (which are typed in the context $\Gamma$) are interpreted in accordance with $\gamma$; and $\lsq\Gamma\rsq$ is the set of vectors of the possible set-theoretical interpretations of the variables it types. 

In the rest of this section, we will use the particular definition of $\lsq\cdot\rsq$ given in the proof of Fact~\ref{Car}. 

In particular, by explicitly doing the calculations, one finds that
\begin{lemma}\label{Nok}There is a countable set $\tilde\omega$ s.t., for any well-formed context $\Gamma$ and $\gamma\in\lsq\Gamma\rsq$, $\lsq\Gamma\vdash\Natw\rsq_\gamma=\tilde\omega$. Furthermore, there is a bijection $\omega\to\tilde\omega$, $n\mapsto\overline n$ s.t.:\begin{itemize}
\item $\lsq\Gamma\vdash 0\rsq_\gamma=\overline 0$.
\item $\tilde S=\lsq\Gamma\vdash S\rsq_\gamma$ is the map $\tilde\omega\to\tilde\omega$ given by $\tilde S(\overline n)=\overline{n+1}$.
\item $\tilde +=\lsq\Gamma\vdash +\rsq_\gamma$ is the map $\tilde\omega\to\tilde\omega^{\tilde\omega}$ given by $\tilde+(\overline n)(\overline m)=\overline{n+m}$.
\item $\tilde \times=\lsq\Gamma\vdash \times\rsq_\gamma$ is the map $\tilde\omega\to\tilde\omega^{\tilde\omega}$ given by $\tilde\times(\overline n)(\overline m)=\overline{nm}$.
\end{itemize}
\end{lemma}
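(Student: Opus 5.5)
The plan is to compute $\lsq\Gamma\vdash\Natw\rsq_\gamma$ directly from Carneiro's interpretation of $W$-types. In \cite[section~5]{Car19}, a type $Wx:a,b$ is interpreted as the least fixed point, reached by transfinite iteration inside the relevant universe $U_{n}$, of the operator
\[X\mapsto\{(x,f)\::\:x\in\lsq a\rsq,\ f:\lsq b\rsq_{x}\to X\}.\]
So the first step is to evaluate the parameters for $\Natw$. The index type is $\top\oplus\top$, which is closed and does not depend on $\gamma$; its interpretation is a two-element set whose elements we call $\iota_0,\iota_1$. The branching family $\texttt{rec}_\oplus\,(\lambda\_\mapsto\bot)(\lambda\_\mapsto\top)\,x$ is computed with the recursor clause. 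It gives $\lsq\bot\rsq=\varnothing$ at $\iota_0$ and $\lsq\top\rsq=\{\bullet\}$ at $\iota_1$.

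Since $\Natw$ is a closed term, every subterm is interpreted independently of $\gamma$. The same holds for the context $\Gamma$, because Carneiro's clauses for closed expressions only ever extend the valuation with bound variables. Hence we may define $\tilde\omega:=\lsq\Gamma\vdash\Natw\rsq_\gamma$, and it does not depend on $\Gamma$ or $\gamma$. The independence is best checked as a small auxiliary induction: if the free variables of $e$ are among the first $k$, then $\lsq\Gamma\vdash e\rsq_\gamma$ depends only on the first $k$ coordinates of $\gamma$. For closed $e$ this gives the claim. I expect this is implicit in \cite{Car19}, and I will cite it from there if possible.

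Next, I define $\overline n$ by recursion on $n$:
\[\overline 0:=(\iota_0,\varnothing),\qquad \overline{n+1}:=(\iota_1,\{(\bullet,\overline n)\}).\]
The function $\varnothing$ is the unique map from $\lsq\bot\rsq=\varnothing$. I then show that the fixed point is exactly $\{\overline n:n\in\omega\}$.

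1. Closure: this set is closed under the operator, since the only maps $\{\bullet\}\to X$ are singletons $\{(\bullet,y)\}$.
2. Minimality: by induction on $n$, each $\overline n$ lies in every stage from $n+1$ onward.
3. Injectivity: $n\mapsto\overline n$ is injective, by induction using the ranks of the pairs, or simply by unfolding the first coordinate and the successor structure.

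This gives the bijection $\omega\to\tilde\omega$ and shows that $\tilde\omega$ is countable.

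For the operations, the key fact is how the constructor is interpreted. $\lsq\texttt{sup}\rsq$ sends $(x,f)$ to the pair $(x,f)$, and the recursor is interpreted by well-founded recursion along $\tilde\omega$, following the clauses of \cite{Car19}. The terms $0$ and $S$ are, by definition of the reduction in \cite[section~5.2]{Car19}, $\texttt{sup}\,\iota_0\,(\text{elim}_\bot)$ and $\lambda m\mapsto\texttt{sup}\,\iota_1\,(\lambda\_\mapsto m)$. Their interpretations are $\overline 0$ and $\overline n\mapsto\overline{n+1}$, once we note that a $\lambda$-abstraction is interpreted as its set-theoretic graph.

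For $+$ and $\times$, the terms are built with $\text{rec}_{\Natw}$ from $0$ and $S$. Using the recursor clause, I show by induction on $m$ that
\[\tilde+(\overline n)(\overline m)=\overline{n+m},\qquad \tilde\times(\overline n)(\overline m)=\overline{nm}.\]
Each induction step unfolds one application of the recursor at $\overline{m+1}=(\iota_1,\{(\bullet,\overline m)\})$. The multiplication case uses the result already obtained for addition.

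I expect the main obstacle to be bookkeeping rather than mathematics. Carneiro's translation of $\Nat$'s recursor into $W$-recursion inserts extra $\lambda$-abstractions and $\texttt{rec}_\oplus$ case splits, and the recursion is defined through the evaluation of these in $U_\omega$. I would first fix explicit $\CIC^+_W$ terms for $0,S,+,\times$. Then I would verify their semantics via Fact~\ref{Car}(iii), using the definitional equalities $x+0\equiv x$ and $x+Sy\equiv S(x+y)$, which are judgmentally valid in $\CIC_W^+$ on constructor forms. This avoids unfolding the recursor semantics by hand: the equalities transfer to the model and give the recursive equations directly.
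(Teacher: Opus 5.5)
Your proposal is correct and follows the paper's route: the paper justifies this lemma only by ``explicitly doing the calculations'' in Carneiro's model, and your unfolding of the $W$-type fixed point for $\Natw$, of the constructor, and of the recursor clause is that calculation, written out in more detail than the paper gives. One caveat: your shortcut via Fact~\ref{Car}(iii) for $+$ and $\times$ needs the $k$-correctness hypothesis, which the lemma as stated does not assume, so for the statement as written you should keep the direct induction through the recursor clause (the shortcut does suffice for the lemma's only use, in Lemma~\ref{W2Z}, where $k$-correctness is assumed).
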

We can now undertake the step (ii) of the proof:
\begin{lemma}\label{W2Z}Let $\varphi$ be an arithmetic formula with free number variables in $(x_i)_{i=1\dots p}$ and free subset variables in $(P_j)_{j=1\dots q}$; let moreover $n_1,\dots,n_p\in\omega$, $A_1,\dots,A_q\in\wp(\omega)$; then, if $\kappa_n$ is $k$-correct for a sufficiently large $k$,
$$\bullet\in\lsq x_i:\Natw\dots,P_j:\Set\Natw{}\dots{} \vdash \varphi_W\rsq_{\overline{n_i}\dots,\overline{A_j}\dots}\;\; \Longleftrightarrow\;\;\varphi_Z[n_i/x_i\dots,A_j/P_j\dots]$$
\end{lemma}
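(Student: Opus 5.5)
The plan is to argue by structural induction on $\varphi$, proving the stated equivalence simultaneously for all valuations $n_1,\dots,n_p\in\omega$ and $A_1,\dots,A_q\in\wp(\omega)$. Here $\overline{A}$ denotes the element of $\lsq\Gamma\vdash\Set\Natw\rsq_\gamma$ that corresponds to $A$. Since $\Set\Natw$ is $\Natw\to U_0$, Carneiro's interpretation makes this the set of functions $\tilde\omega\to\{\varnothing,\{\bullet\}\}$. We therefore take $\overline A$ to be the function sending $\overline m$ to $\{\bullet\}$ if $m\in A$ and to $\varnothing$ otherwise. By Lemma~\ref{Nok} the map $A\mapsto\overline A$ is a bijection $\wp(\omega)\to\lsq\Gamma\vdash\Set\Natw\rsq_\gamma$. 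We fix $k$ larger than the correctness thresholds required by Fact~\ref{Car} for the finitely many typing and conversion judgements that occur in $\varphi_W$ and its subformulas.

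The first step concerns terms. For each arithmetic term $t$ we show by induction on $t$ that $\lsq\Gamma\vdash t_W\rsq_\gamma=\overline{t[n_i/x_i]}$. Variables are handled by the clause of Carneiro's definition that projects a variable out of $\gamma$. The symbols $0,S,+,\times$ are handled by Lemma~\ref{Nok}, together with the fact that the interpretation of an application is set-theoretic function application. The application step is where Fact~\ref{Car}(ii) is needed, to guarantee that the arguments lie in the domains.

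For atomic formulas, consider $\vepsilon(P_j,t)$, which is translated as the application $P_j\,t$. Its interpretation is $\overline{A_j}(\overline{m})$ with $m$ the value of $t$. Hence $\bullet$ belongs to it iff $m\in A_j$, which is exactly the meaning of $\varphi_Z$. If equality of numbers is part of the language, we also need the interpretation of the inductive equality type to be $\{\bullet\}$ when the two sides coincide and $\varnothing$ otherwise; this follows from unwinding Carneiro's interpretation of $\mathrm{Eq}$ (via its $W$-encoding). For $\bot$, the interpretation is the empty set.

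For $\to$ we use the interpretation of a non-dependent $\Pi$ into $U_0$. Carneiro interprets $\forall x:\alpha,\beta$ with $\beta$ a proposition as $\{\bullet\}$ exactly when $\bullet\in\beta$ for all $x\in\alpha$, and as $\varnothing$ otherwise. Applying this with $\alpha=\varphi$ and $\beta=\delta$ gives the implication, and the induction hypothesis finishes the case. The quantifiers $\forall$ and $\forall^2$ use the same clause, together with the facts that $\tilde\omega$ and the set of $\overline A$ are exhausted by the bijections, and that interpreting $\varphi_W$ in the extended context at $\gamma$ extended by $\overline m$ equals the interpretation with the valuation updated. The existentials are inductive types in $U_0$. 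For them we must compute that Carneiro's interpretation of $\exists$ (impredicative, truncated to $\{\varnothing,\{\bullet\}\}$) contains $\bullet$ iff some witness $w$ in the domain has $\bullet\in\lsq\varphi\rsq_{\gamma,w}$. Alternatively, we can encode $\exists$ as $\neg\forall\neg$, which is classically equivalent in $\CIC^+$; the rewriting is justified inside $\CIC^+$ and transported by soundness (Fact~\ref{Car}(ii)).

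There is one subtlety with the context-based formulation: $\varphi_W$ is a function on $\contexts$. The cleanest route is to unfold the recursion on $\varphi$ definitionally into an explicit term with binders, using Fact~\ref{Car}(iii) to replace $\varphi_W$ applied to a context by the unfolded term. The main obstacle I expect is this bookkeeping, namely matching Carneiro's actual clauses, including the $W$-type reduction of inductive propositions like $\exists$ and $=$, and the weakening and substitution lemmas relating interpretations in extended contexts. I would first establish a substitution lemma $\lsq\Gamma,x:\alpha\vdash e\rsq_{\gamma,a}$ versus $\lsq\Gamma\vdash e[u/x]\rsq_\gamma$ with $a=\lsq\Gamma\vdash u\rsq_\gamma$, and then let each case of the induction become a direct computation.
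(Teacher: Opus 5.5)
Your proposal is correct and follows the same route as the paper. The paper argues by structural induction on $\varphi$, reducing the atomic case to the computation $\lsq\Gamma\vdash t_W\rsq_\gamma=\overline{t[n_i/x_i]}$ via Lemma~\ref{Nok}. You additionally supply the bookkeeping the paper leaves implicit: $\overline{A}$ as a characteristic function into $\{\varnothing,\{\bullet\}\}$, the propositional $\Pi$ clause for $\to$ and $\forall$, the truncated interpretation of $\exists$, and the weakening and substitution lemmas. One small slip: equality is an indexed family, so it cannot come from a $W$-encoding and must be one of the basic types of $\CIC^+_W$. Its interpretation is nonetheless directly $\{\bullet\}$ or $\varnothing$ according to whether the two sides have equal interpretations, so your conclusion for that case still holds.
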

The proof is by structural recursion. The base step follows from showing that, for any term $t$, one has $\lsq x_i:\Natw\dots{}\vdash t_W\rsq_{\overline{n_i}\dots}=\overline{t_Z[n_i/x_i\dots]}$ which follows using the fact that $\tomega$ is ``well-behaved'' with respect to the arithmetic operations.

\begin{proof}[\teobname]
Let $\varphi$ be an arithmetic formula with no free variables, and assume that $\varphi_C$ is inhabited by $e:\varphi_C$. Now, {\it working in $\ZFC_\infty$}, we set out to prove $\varphi_Z$.

Using the Lemma~\ref{L2W}, we can pick an $\vdash e\pr:\varphi_W$. In order to use the map $\lsq\cdot\rsq$, let $v$ be an universe valuation that sets all universe variables to 0, and $\vdash\langle e\pr\rangle_v:\varphi_W$ (see the theorems 6.2 and 6.3 in \cite{Car19}). Now assume that $\kappa_n$ is $k$-correct for a sufficiently large $k$ as to satisfy the hypothesis of Fact~\ref{Car}. Let $p=\lsq\vdash \langle e\pr\rangle_v\rsq_{()}$, and by Fact~\ref{Car},(ii) one has that $p\in\lsq\vdash\varphi_W\rsq_{()}$; but, by (i), $\lsq\vdash\varphi_W\rsq_{()}\subseteq\{\bullet\}$, hence $p=\bullet$, and by the Lemma~\ref{W2Z} then $\varphi_Z$ is true.

We have now proven that $\ZFC_\infty$ shows $\varphi_Z$ if $\varphi_C$ is inhabited.\qed
\end{proof}
\bibliography{biblio}
\end{document}